\newtheorem{theorem}{Theorem}[section]
\newtheorem{lemma}[theorem]{Lemma}
\newtheorem{corollary}[theorem]{Corollary}
\theoremstyle{definition}
\newtheorem{problem}[theorem]{Problem}
\theoremstyle{remark}
\newtheorem{remark}[theorem]{Remark}
\newcommand{\N}{\mathbb{N}}
\newcommand{\Z}{\mathbb{Z}}
\newcommand{\R}{\mathbb{R}}
\newcommand{\C}{\mathbb{C}}
\newcommand{\bG}{\mathbb{G}}
\newcommand{\bm}{\mathbf{m}}
\newcommand{\cN}{\mathcal{N}}
\newcommand{\on}{\operatorname}
\renewcommand{\i}{{\rm i}}
\renewcommand{\Re}{\on{Re}}
\renewcommand{\mod}[1]{\,(\on{mod}#1)}
\newcommand{\of}[1]{\left(#1\right)}
\newcommand{\set}[1]{\left\{#1\right\}}
\newcommand{\BEu}[1]{\underset{#1}{\mathlarger{\mathlarger{\mathbb{E}}}^{~}}\,}
\author{Biao Wang}
\address{School of Mathematics and Statistics, Yunnan University, Kunming, Yunnan 650500, China}
\email{bwang@ynu.edu.cn}
\date{\today}
\title[Averages of multiplicative functions]{On averages of completely multiplicative functions over co-prime integer pairs}
\subjclass[2020]{37A44, 11N99}
\keywords{Wirsing’s theorem, completely multiplicative functions, Gaussian integers, primitive lattice points}
\begin{document}
	
\begin{abstract}
Recently, Donoso, Le, Moreira and  Sun studied the asymptotic behavior of the averages of completely multiplicative functions over the Gaussian integers. They derived Wirsing's theorem for Gaussian integers, answered a question of Frantzikinakis and Host for sum of two squares, and obtained a variant of a theorem of Bergelson and Richter on ergodic averages along the number of prime factors of integers. In this paper, we will show the analogue of these results for co-prime integer pairs. Moreover, building on Frantzikinakis and Host's results, we obtain some  convergences on the multilinear averages of multiplicative functions over primitive lattice points. 
\end{abstract}

\maketitle

\section{Introduction}

Let $n\ge1$ be an integer. Let $\Omega(n)$ be the number of prime factors of $n$ with multiplicity counted. Let $\lambda(n)=(-1)^{\Omega(n)}$ be the Liouville function, which is a bounded completely multiplicative function. It is well known that the Prime Number Theorem is equivalent to the assertion that
\begin{equation}\label{eqn_pnt_lambda}
	\lim_{N\to\infty} \frac1N\sum_{n=1}^N  \lambda(n)=0,
\end{equation}
see \cite{Landau1953, Mangoldt1897}. This leads to an intense study of the existence of the mean value
\begin{equation}\label{eqn_mf}
	M(f):= \lim_{N\to\infty} \frac1N\sum_{n=1}^N f(n)
\end{equation}
for bounded multiplicative functions $f$. Erd\H{o}s and Wintner \cite{Erdos1957} conjectured that  the limit \eqref{eqn_mf} exists, if $f$ takes the only values  $\pm1$. In 1967, by extending  both his own earlier work \cite{Wirsing1961, Wirsing1964} and the work of Delange \cite{Delange1961}, Wirsing \cite{Wirsing1967} completed the proof of the Erd\H{o}s-Wintner conjecture. Shortly after, Hal\'{a}sz \cite{Halasz1968} extended these results further  to complex-valued multiplicative functions. In 2016, by a structure theorem of multiplicative functions developed in \cite{FrantzikinakisHost2017}, Frantzikinakis and Host \cite{FrantzikinakisHost2016} extended Wirsing's theorem and an asymptotic formula of Hal\'{a}sz to multilinear averages of multiplicative functions. In \cite{FrantzikinakisHost2016}, they also proposed the following problem for homogeneous polynomials.

\begin{problem}[{\cite[Page 91]{FrantzikinakisHost2016}}]
\label{question_FH}
	    Let $f: \N \to \R$ be a real valued bounded completely multiplicative function. Let $P \in \Z[x,y]$ be a homogeneous polynomial with values on the positive integers. Does the following limit
    \[
       \lim_{N \to \infty}\frac1{N^2}\sum_{m, n=1}^N f(P(m,n))
    \]
    exist?
\end{problem}

When the polynomial $P$ is a product of linear forms, Frantzikinakis and Host already provided a positive answer to Problem~\ref{question_FH} in \cite{FrantzikinakisHost2016}.  Later, Klurman and  Mangerel \cite{KlurmanMangerel2017} established an effective asymptotic formulae in this case. And Matthiesen \cite{Matthiesen2020} studied the asymptotic formulae for related sums over dilated bounded convex subsets. When P is not a product of linear forms, Donoso, Le, Moreira and  Sun \cite{DLMS2024} solved Problem~\ref{question_FH}  for the polynomial $P(m,n)=m^2+n^2$ recently.  If $A$ is a finite set and $f:A\to\C$ is a function on $A$, we define $\BEu{x\in A}f(x)=\frac1{|A|}\sum_{x\in A}f(x)$. In this paper, we will show an equivalent statement of Problem~\ref{question_FH} in terms of the averages over co-prime integer pairs, which is stated as follows.

\begin{theorem}\label{thm_mainthm}
Let $f: \N \to \R$ be a  bounded completely multiplicative function. Let $P \in \Z[x,y]$ be a homogeneous polynomial of degree $k$ with  $P(\N^2)\subseteq \N$. Then the following limit
\begin{equation}\label{thm_mainthm_eqn1}
	\lim_{N\to\infty} \BEu{1\leq m,n\leq N}  f(P(m,n))
\end{equation}
exists if and only if the limit
\begin{equation}\label{thm_mainthm_eqn2}
	\lim_{N\to\infty} \BEu{\substack{1\leq m,n\leq N\\ \gcd(m,n)=1}} f(P(m,n))
\end{equation}
exists. If they exist, then we have
\begin{align}
	 \lim_{N\to\infty} \BEu{\substack{1\leq m,n\leq N\\ \gcd(m,n)=1}} f(P(m,n)) &= \frac{\pi^2}{6}\cdot\prod_{p}\left(1-\frac{f(p)^k}{p^2}\right)\cdot  \lim_{N\to\infty} \BEu{1\leq m,n\leq N}  f(P(m,n)), \label{thm_mainthm_eqn3}\\
	 \lim_{N\to\infty} \BEu{1\leq m,n\leq N}  f(P(m,n)) &=\frac6{\pi^2}\cdot \left(\sum_{n=1}^\infty \frac{f(n)^k}{n^2}\right)\cdot \lim_{N\to\infty} \BEu{\substack{1\leq m,n\leq N\\ \gcd(m,n)=1}} f(P(m,n)). \label{thm_mainthm_eqn4}
\end{align}
\end{theorem}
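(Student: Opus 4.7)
The plan is to exploit the homogeneity of $P$ together with the complete multiplicativity of $f$ to reduce each average to the other by a greatest-common-divisor decomposition, and then to extract both directions of the equivalence through a single dominated-convergence argument.

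First, for any pair $(m,n)$ with $d=\gcd(m,n)$ and $m=dm'$, $n=dn'$ coprime, homogeneity of degree $k$ gives $P(m,n)=d^kP(m',n')$, and complete multiplicativity then gives $f(P(m,n))=f(d)^k f(P(m',n'))$. Writing
\[
S(N):=\sum_{m,n=1}^N f(P(m,n)),\qquad T(N):=\sum_{\substack{1\le m,n\le N\\ \gcd(m,n)=1}} f(P(m,n)),
\]
partitioning by the value of $d$ yields the identity
\[
S(N)=\sum_{d=1}^N f(d)^k\,T(\lfloor N/d\rfloor),
\]
and the usual M\"obius inversion produces the dual identity
\[
T(N)=\sum_{d=1}^N \mu(d)\,f(d)^k\,S(\lfloor N/d\rfloor).
\]

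Since $f$ is bounded and completely multiplicative, one may assume $|f|\le 1$, and then the trivial bound $|S(N)|,|T(N)|\le N^2$ shows that after dividing by $N^2$ each summand in the two identities is dominated by $1/d^2$ uniformly in $N$. This justifies passing to the limit termwise. If \eqref{thm_mainthm_eqn2} holds with limit $L$, combining with the standard count $\#\{(m,n)\colon 1\le m,n\le N,\ \gcd(m,n)=1\}=\frac{6}{\pi^2}N^2+O(N\log N)$ gives $T(N)/N^2\to\frac{6L}{\pi^2}$, whence dominated convergence applied to the first identity yields
\[
\lim_{N\to\infty}\frac{S(N)}{N^2}=\frac{6L}{\pi^2}\sum_{d=1}^\infty\frac{f(d)^k}{d^2},
\]
which is exactly \eqref{thm_mainthm_eqn4}. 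Conversely, if \eqref{thm_mainthm_eqn1} has limit $L'$, then the M\"obius-dual identity gives
\[
\lim_{N\to\infty}\frac{T(N)}{N^2}=L'\sum_{d=1}^\infty\frac{\mu(d)f(d)^k}{d^2}=L'\prod_p\left(1-\frac{f(p)^k}{p^2}\right),
\]
where the Euler product is valid because $d\mapsto f(d)^k$ is completely multiplicative and bounded by $1$; dividing by $6/\pi^2$ to convert to the coprime average then produces \eqref{thm_mainthm_eqn3}.

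I do not anticipate a serious obstacle here: the argument is entirely elementary once the gcd decomposition is paired with complete multiplicativity. The only mild point requiring care is the uniform domination that legitimizes exchanging limit and sum, and this reduces at once to $|f|\le 1$ together with absolute convergence of $\sum 1/d^2$. As an internal consistency check, complete multiplicativity gives the Euler factorization $\sum_n f(n)^k/n^2=\prod_p(1-f(p)^k/p^2)^{-1}$, so the two scalar factors appearing in \eqref{thm_mainthm_eqn3} and \eqref{thm_mainthm_eqn4} are indeed reciprocals, as they should be.
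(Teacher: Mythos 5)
Your proposal is correct and follows essentially the same route as the paper: the two identities $S(N)=\sum_d f(d)^k T(\lfloor N/d\rfloor)$ and $T(N)=\sum_d\mu(d)f(d)^kS(\lfloor N/d\rfloor)$ are exactly the content of the paper's Lemmas~\ref{lem_keylemma} and~\ref{lem_keylemma_2} specialized to $r=2$ with $b=f\circ P$. The only difference is presentational — you justify the interchange of limit and sum by domination against $1/d^2$, whereas the paper truncates at $d\le D$ with explicit $O(1/D)$ error terms and lets $N\to\infty$ before $D\to\infty$ — and your observation that bounded complete multiplicativity forces $|f|\le 1$ correctly legitimizes the domination.
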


As a consequence of Theorem~\ref{thm_mainthm}, by Donoso et. al.'s result \cite[Theorem A]{DLMS2024}, the average \eqref{thm_mainthm_eqn2} also exists for $P(m,n)=m^2+n^2$. Moreover, we have
\begin{corollary}
	\label{thm_mainthm1}
	Let $f:\N\to \R$ be a bounded completely multiplicative function. Then the following limit
$$\lim_{N\to\infty} \BEu{\substack{1\leq m,n\leq N\\ \gcd(m,n)=1}} f(m^2+n^2)$$
exists and equals 
\begin{equation}
	\frac1{2-f(2)}\cdot\prod_{\substack{ p\,prime\\ p\equiv 1\mod 4}}\frac{(p+f(p))(p-1)}{(p-f(p))(p+1)}.
\end{equation}
\end{corollary}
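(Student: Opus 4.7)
The plan is to combine Theorem~\ref{thm_mainthm} with the solution to Problem~\ref{question_FH} for $P(m,n)=m^2+n^2$ obtained by Donoso, Le, Moreira and Sun, and then simplify the resulting Euler product prime-by-prime.

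\textbf{Step 1 (reduction to the full average).} The polynomial $P(m,n)=m^2+n^2$ is homogeneous of degree $k=2$ and clearly satisfies $P(\N^2)\subseteq\N$, so Theorem~\ref{thm_mainthm} applies. By \cite[Theorem A]{DLMS2024}, the limit
$$L := \lim_{N\to\infty} \BEu{1\le m,n\le N} f(m^2+n^2)$$
exists and admits an explicit Euler product expression with factors determined by the splitting type of each rational prime $p$ in $\Z[i]$ (ramified for $p=2$, split for $p\equiv 1\pmod 4$, inert for $p\equiv 3\pmod 4$). Consequently, the "if and only if" in Theorem~\ref{thm_mainthm} guarantees that the coprime average exists, and formula \eqref{thm_mainthm_eqn3}, combined with $\zeta(2)=\pi^2/6=\prod_p(1-1/p^2)^{-1}$, gives
$$\lim_{N\to\infty}\BEu{\substack{1\le m,n\le N\\ \gcd(m,n)=1}} f(m^2+n^2) \;=\; \prod_p\frac{1-f(p)^2/p^2}{1-1/p^2}\cdot L.$$

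\textbf{Step 2 (Euler product simplification).} Using the multiplicativity of both $L$ and the correction factor, I would match local factors at each rational prime.  For $p\equiv 3\pmod 4$, the prime is inert with Gaussian norm $p^2$, so the corresponding local factor of $L$ depends only on $f(p)^2/p^2$ and is precisely the reciprocal of $(1-f(p)^2/p^2)/(1-1/p^2)$; hence the inert primes cancel entirely and disappear from the final product.  For $p\equiv 1\pmod 4$, the splitting $p=\pi\overline{\pi}$ with $N(\pi)=N(\overline\pi)=p$ produces a local factor of $L$ quadratic in $f(p)/p$; after factoring $1-f(p)^2/p^2=(1-f(p)/p)(1+f(p)/p)$ and $1-1/p^2=(1-1/p)(1+1/p)$ and collecting terms, the local contribution reduces to the claimed $\frac{(p+f(p))(p-1)}{(p-f(p))(p+1)}$.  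Finally, for $p=2$, the ramified Gaussian prime $1+i$ has norm $2$, contributing a single factor of the form $(1-f(2)/2)^{-1}$ in $L$, which together with the correction factor at $p=2$ collapses to $\frac{1}{2-f(2)}$.

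\textbf{Main obstacle.} The routine inputs (existence of $L$ and existence of the coprime limit, and identity \eqref{thm_mainthm_eqn3}) are direct citations of \cite[Theorem A]{DLMS2024} and Theorem~\ref{thm_mainthm}.  The substantive work is the Euler-product bookkeeping in Step~2: one must read off the exact shape of the local factors in the DLMS formula, keep track of the unit group of $\Z[i]$, handle the ramified prime $p=2$ correctly, and verify that the contributions at inert primes really cancel exactly against $\prod_p(1-f(p)^2/p^2)/(1-1/p^2)$ rather than leaving a residual factor.  This is where I expect the only genuine calculation to occur.
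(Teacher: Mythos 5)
Your route is exactly the paper's: invoke Theorem~\ref{thm_mainthm} with $k=2$, feed in the Euler product of \cite[Theorem A]{DLMS2024}, and match local factors prime by prime. Your bookkeeping at the inert primes (exact cancellation) and at the split primes (reduction to $\frac{(p+f(p))(p-1)}{(p-f(p))(p+1)}$) is correct and is what the paper does.

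The gap is at $p=2$, precisely the step you flag as ``where the only genuine calculation occurs'' and then assert rather than perform. The correction factor at $2$ coming from $\zeta(2)\prod_p\bigl(1-f(p)^2/p^2\bigr)$ is $\frac{1-f(2)^2/4}{1-1/4}=\frac{(2-f(2))(2+f(2))}{3}$, and multiplying by the DLMS local factor $\frac{1}{2-f(2)}$ gives $\frac{2+f(2)}{3}$, not $\frac{1}{2-f(2)}$; the two agree only when $f(2)=\pm1$. A direct check exposes this: take $f(2)=0$ and $f(p)=1$ for all odd $p$, so $f(m^2+n^2)=1$ exactly when $m+n$ is odd. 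Among coprime pairs the classes $(1,1),(1,0),(0,1)$ mod $2$ each have density $1/3$, so the coprime average is $2/3$, whereas $\frac{1}{2-f(2)}=\frac12$. Thus the local factor at $2$ in the final formula should be $\frac{2+f(2)}{3}$. (Separately, your parenthetical that the DLMS factor at $2$ is $(1-f(2)/2)^{-1}$ is off by a factor of $2$ from $\frac{1}{2-f(2)}$.) To be fair, the corollary as printed in the paper contains the same unexamined assertion at $p=2$, so your proposal reproduces the paper's slip rather than introducing a new one --- but carrying out the computation you yourself identified as the crux would have caught it.
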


Let $\bG=\set{m+\i n: m,n\in\Z}$ be the set of Gaussian integers and let $\bG^\ast$ be the set of non-zero Gaussian integers. Let $\cN(m+\i n)=m^2+n^2$ be the norm of Gaussian integer $m+\i n$.  Then $f(m^2+n^2)=(f\circ\cN) (m+\i n)$, which is a completely multiplicative function over $\bG^\ast$. Similar to Theorem~\ref{thm_mainthm}, by  \cite[Theorem 1.2]{DLMS2024} we have

\begin{theorem}
\label{thm_mainthm2}
	If $f: \bG^\ast \to \R$ is a real-valued bounded completely multiplicative function, then the following limit
	$$\lim_{N\to\infty} \BEu{\substack{1\leq m,n\leq N\\ \gcd(m,n)=1}} f(m+\i n)$$
	exists and equals 
$$\frac{\pi^2}{6}\cdot\prod_{p}\left(1-\frac{f(p)}{p^2}\right)\cdot \lim_{N\to\infty} \BEu{ 1\leq m,n\leq N} f(m+\i n).$$
\end{theorem}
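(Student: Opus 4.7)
The plan is to mimic the proof of Theorem~\ref{thm_mainthm}, using M\"obius inversion to relate the coprime average to the unrestricted average, which by \cite[Theorem 1.2]{DLMS2024} is already known to converge. Writing $[k=1]=\sum_{d\mid k}\mu(d)$, the first step is the decomposition
\[
\sum_{\substack{1\le m,n\le N\\ \gcd(m,n)=1}} f(m+\i n) = \sum_{d=1}^{N}\mu(d)\sum_{\substack{1\le m,n\le N\\ d\mid m,\, d\mid n}} f(m+\i n).
\]
Substituting $m=dm'$, $n=dn'$ and regarding $d\in\N$ as an element of $\bG^{\ast}$, complete multiplicativity of $f$ on $\bG^{\ast}$ gives $f(m+\i n)=f(d)\,f(m'+\i n')$, so the inner sum becomes $f(d)\sum_{1\le m',n'\le \lfloor N/d\rfloor}f(m'+\i n')$.

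Dividing by $N^{2}$, I would rewrite this as
\[
\frac{1}{N^{2}}\sum_{\substack{1\le m,n\le N\\ \gcd(m,n)=1}} f(m+\i n) = \sum_{d=1}^{N}\frac{\mu(d)f(d)}{d^{2}}\left(\frac{\lfloor N/d\rfloor}{N/d}\right)^{2}\BEu{1\le m',n'\le \lfloor N/d\rfloor} f(m'+\i n').
\]
Since $f$ is bounded by some constant $C$, each summand is dominated uniformly in $N$ by $C/d^{2}$, while for each fixed $d$ the inner average converges as $N\to\infty$ to $L:=\lim_{N\to\infty}\BEu{1\le m,n\le N} f(m+\i n)$ by \cite[Theorem 1.2]{DLMS2024}. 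Dominated convergence therefore yields
\[
\lim_{N\to\infty}\frac{1}{N^{2}}\sum_{\substack{1\le m,n\le N\\ \gcd(m,n)=1}} f(m+\i n) = L\cdot\sum_{d=1}^{\infty}\frac{\mu(d)f(d)}{d^{2}} = L\cdot\prod_{p}\left(1-\frac{f(p)}{p^{2}}\right),
\]
where the Euler product is valid because $d\mapsto\mu(d)f(d)$ is multiplicative on $\N$, $\mu$ is supported on squarefree integers, and the Dirichlet series converges absolutely. Finally, normalizing by the coprime density $|\{(m,n):1\le m,n\le N,\ \gcd(m,n)=1\}|/N^{2}\to 6/\pi^{2}$ delivers the formula in the theorem.

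The argument is essentially the $k=1$ case of Theorem~\ref{thm_mainthm} transplanted to $\bG^{\ast}$, and no new obstacle appears: all the analytic content is already packaged in the theorem of Donoso--Le--Moreira--Sun. The only step requiring any care is the interchange of limit and summation over $d$, and this is fully justified by the uniform dominator $C/d^{2}$ coming from the boundedness of $f$.
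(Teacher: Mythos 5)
Your proposal is correct and follows essentially the same route as the paper: M\"obius inversion over the common divisor $d$, complete multiplicativity on $\bG^\ast$ to pull out $f(d)$, the Donoso--Le--Moreira--Sun theorem for the inner limit, and normalization by the coprime density $6/\pi^2$. The only cosmetic difference is that you justify the interchange of limit and sum by dominated convergence with the dominator $C/d^2$, whereas the paper truncates the $d$-sum at a parameter $D$ and takes $N\to\infty$ followed by $D\to\infty$; these are equivalent.
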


Another way to strengthen \eqref{eqn_pnt_lambda} is a dynamical generalization  given in the recent work of Bergelson and Richter \cite{BergelsonRichter2022}. They proved that for any given uniquely ergodic topological dynamical system $(X, \mu, T)$, the following mean value 
\begin{equation}\label{eqn_BR2022thmA}
	\lim_{N\to\infty} \BEu{1\leq n\leq N}  f(T^{\Omega(n)}x)=\int_X f \,d\mu
\end{equation}
holds for any $f\in C(X)$ and $x\in X$. After Bergelson and Richter's work \cite{BergelsonRichter2022}, there are several new generalizations discovered by Loyd \cite{Loyd2022} and the related work \cite{Wang2022, WWYY2023, LWWY2024} on the ergodic theorem \eqref{eqn_BR2022thmA}. As regards the averages along $\Omega(m^2+n^2)$, Donoso et. al. \cite[Theorem D]{DLMS2024} showed that 
	\begin{equation}\label{eqn_DLMS2024thmD}
		\lim_{N\to\infty} \BEu{1\leq m,n\leq N} f(T^{\Omega(m^2+n^2)}x)=\int_Xf\,d\mu
	\end{equation}
holds for any $f\in C(X)$ and $x\in X$. In the following theorem, we provide a new variant of \eqref{eqn_BR2022thmA} related to \eqref{eqn_DLMS2024thmD}.
 
 \begin{theorem}
\label{thm_mainthm3}
	Let $(X,T, \mu)$ be a uniquely ergodic topological dynamical system. Then for any $x\in X$ and any $f\in C(X)$, we have
	\begin{equation}\label{eqn_thm_mainthm3}
		\lim_{N\to\infty} \BEu{\substack{1\leq m,n\leq N\\ \gcd(m,n)=1}} f(T^{\Omega(m^2+n^2)}x)=\int_Xf\,d\mu.
	\end{equation}
\end{theorem}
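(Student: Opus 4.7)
The plan is to reduce the coprime average in \eqref{eqn_thm_mainthm3} to the unrestricted one in \eqref{eqn_DLMS2024thmD} by a Möbius-inversion step, exactly parallel to the mechanism driving Theorem~\ref{thm_mainthm}. The essential ingredient is the cocycle identity
\[
\Omega\bigl(d^{2}(m'^{2}+n'^{2})\bigr)=2\Omega(d)+\Omega(m'^{2}+n'^{2}),
\]
coming from complete additivity of $\Omega$. It converts the $d$-rescaling of $m^{2}+n^{2}$ into a twist of the test function from $f$ to $g_{d}(y):=f(T^{2\Omega(d)}y)$. Note that $g_{d}\in C(X)$, $\|g_{d}\|_{\infty}\le\|f\|_{\infty}$, and $\int_{X} g_{d}\,d\mu=\int_{X} f\,d\mu$ by $T$-invariance of $\mu$; this is the dynamical analogue of the identity $f(d^{k})=f(d)^{k}$ that underlies \eqref{thm_mainthm_eqn3}.

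Starting from $\mathbf{1}_{\gcd(m,n)=1}=\sum_{d\mid\gcd(m,n)}\mu(d)$ and the substitution $m=dm'$, $n=dn'$, I would write
\[
\sum_{\substack{1\le m,n\le N\\ \gcd(m,n)=1}}\!\! f(T^{\Omega(m^{2}+n^{2})}x)
=\sum_{d=1}^{N}\mu(d)\!\!\sum_{1\le m',n'\le\lfloor N/d\rfloor}\!\! g_{d}(T^{\Omega(m'^{2}+n'^{2})}x).
\]
For each fixed $d\ge 1$, Donoso--Le--Moreira--Sun's theorem \eqref{eqn_DLMS2024thmD}, applied with $g_{d}$ as the continuous test function, yields
\[
\lim_{M\to\infty}\BEu{1\le m',n'\le M} g_{d}(T^{\Omega(m'^{2}+n'^{2})}x)=\int_{X} g_{d}\,d\mu=\int_{X} f\,d\mu.
\]

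To assemble the Möbius sum I would split the $d$-range at a growth parameter $D$. The head $d\le D$ has only finitely many terms, so the previous display together with $\lfloor N/d\rfloor^{2}/N^{2}\to d^{-2}$ gives
\[
\frac{1}{N^{2}}\sum_{d\le D}\mu(d)\lfloor N/d\rfloor^{2}\BEu{m',n'\le\lfloor N/d\rfloor} g_{d}(T^{\Omega(m'^{2}+n'^{2})}x)\xrightarrow{N\to\infty}\Bigl(\sum_{d\le D}\frac{\mu(d)}{d^{2}}\Bigr)\int_{X} f\,d\mu.
\]
For the tail $d>D$ the uniform bound $\|g_{d}\|_{\infty}\le\|f\|_{\infty}$ yields a total contribution of size $\|f\|_{\infty}\sum_{d>D}\lfloor N/d\rfloor^{2}/N^{2}=O(\|f\|_{\infty}/D)$ uniformly in $N$. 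Letting $N\to\infty$ first and $D\to\infty$ second, and combining $\sum_{d\ge 1}\mu(d)/d^{2}=6/\pi^{2}$ with the classical count $\#\{1\le m,n\le N:\gcd(m,n)=1\}=(6/\pi^{2})N^{2}+O(N\log N)$, I would arrive at \eqref{eqn_thm_mainthm3}.

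I do not anticipate a substantive obstacle: the argument is a clean ergodic-theoretic repackaging of the Möbius step behind Theorem~\ref{thm_mainthm}, and the only point requiring care --- interchanging the $N$- and $D$-limits --- is entirely absorbed by the explicit $O(1/D)$ tail bound above. In particular, no uniform-in-$x$ version of \eqref{eqn_DLMS2024thmD} is required, because the base point $x$ is fixed throughout and only finitely many continuous test functions $g_{d}$ ($d\le D$) are handled at any stage of the proof.
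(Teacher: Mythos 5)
Your proposal is correct and follows essentially the same route as the paper: Möbius inversion over $d\mid\gcd(m,n)$, the identity $\Omega(d^2(m'^2+n'^2))=2\Omega(d)+\Omega(m'^2+n'^2)$ to absorb the rescaling into the dynamics, the Donoso--Le--Moreira--Sun theorem applied for each fixed $d\le D$, the $O(1/D)$ tail bound justifying the interchange of the $N$- and $D$-limits, and the primitive-pair count $\tfrac{6}{\pi^2}N^2+O(N\log N)$ at the end. The only cosmetic difference is that you twist the test function to $g_d=f\circ T^{2\Omega(d)}$ while the paper shifts the base point to $T^{\Omega(d^2)}x$; these are identical uses of the same input.
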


\begin{remark} Let $k\ge2$ be an integer. Two integers $a$ and $b$ are said to be relatively $k$-prime if $\gcd(a,b)$ is $k$-free. We denote it by $\gcd(a,b)_k=1$. The probability that two integers are relatively $k$-prime is $1/\zeta(2k)$, which is known as Benkoski's theorem \cite{Benkoski1976, Sittinger2010}. Then one may establish the main results for relatively $k$-prime integer pairs in this paper. For example,  similar to the proof of Theorem~\ref{thm_mainthm3}, one can prove that \eqref{eqn_thm_mainthm3} still holds if the condition $\gcd(m,n)=1$ is replaced by $\gcd(m,n)_k=1$. 
\end{remark}

For the proofs of Theorem~\ref{thm_mainthm} to Theorem~\ref{thm_mainthm3}, we develop two lemmas on the relation between summations over lattice points and summations over primitive lattice points in Section~\ref{sec_keylemmas}. Then we apply Donoso et. al.'s results to prove our results in Section~\ref{sec_proofs}. Finally, in Section~\ref{sec_multilinear}, we will give some applications of our techniques to the multilinear averages of multiplicative functions.

\section{Two lemmas}
\label{sec_keylemmas}

In this section, we consider the relation between the summations over lattice points and the summations over primitive lattice points. 

For a lattice point $\bm\in \Z^r$, we write $\bm=(m_1,\dots, m_r)$. For any $d\in \Z$, we define $d\bm=(dm_1,\dots, dm_r)$. We say $d\mid \bm$, if $\bm=d\bm'$ for some $\bm'\in \Z^r$. Define $\gcd(\bm):=\gcd(m_1,\dots, m_r)$ to be the greatest common divisor of the coordinates of $\bm$.  Clearly, $d\mid \gcd(\bm)$ if and only if $d\mid \bm$. We say $\bm$ is a \textit{primitive lattice point}, if $\gcd(\bm)=1$. The study of the counting of primitive lattice points is closely related to the generalized Gauss circle problem, see \cite{Nowak2005} for an example on the primitive lattice points inside an ellipse. 

For any positive number $a\ge 1$, with $[a]$ denote the set of natural numbers less than or equal to $a$. For instance, $[N]=\set{1,2,\dots,N}$. Then $[a]^r=\{(m_1,\dots, m_r)\in \N^r : 1\leq m_1,\dots, m_r \leq a\}$ and $|[a]^r|=\lfloor a\rfloor^r$, where $\lfloor a \rfloor$ is the largest integer less than or equal to $a$.  Let $Z_r(a)=|\set{\bm\in [a]^r: \gcd(\bm)=1}|$ be the number of primitive lattice points in $[a]^r$. Then by \cite{Nymann1972}, we have
\begin{equation}\label{eqn_coprime_r}
Z_r(N)=\frac{N^r}{\zeta(r)} + 
	\begin{cases}
	O(N^{r-1}) & \text{ if } r\ge3;\\
	O(N\log N) & \text{ if } r=2,
	\end{cases}
\end{equation}
where $\zeta(s)=\sum_{n=1}^\infty 1/{n^s}$ is the Riemann zeta function for $\Re s>1$.
We denote by $E_r(N)$ the error term in \eqref{eqn_coprime_r}.

In the following two lemmas, we will show a relation between summations over $[N]^r$ and summations over $\set{\bm\in [N]^r: \gcd(\bm)=1}$ for $N\ge1$.

\begin{lemma}
\label{lem_keylemma}
Let $r\ge 2$ be an integer. Let $b:\N^r\to\C$ be a bounded function. Let $N\ge1$. Then for any $1\leq D\leq N$, we have
	\begin{equation}\label{eqn_keylemma}
			\frac1{N^r}\sum_{\substack{\bm\in [N]^r\\ \gcd(\bm)=1}} b(\bm)= \sum_{d=1}^D\frac{\mu(d)}{d^r} \BEu{\bm\in [\frac{N}d]^r} b(d\bm)+O\of{\frac1{D^{r-1}}}+O\of{\frac{\log N}{N}},
	\end{equation}
	where $\mu(n)$ denotes the M\"obius function.
The $O$-constants in \eqref{eqn_keylemma} depend only on $b$.
\end{lemma}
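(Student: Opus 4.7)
The plan is to apply M\"obius inversion to remove the primitivity condition, and then control the resulting $d$-sum by a truncation argument.

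The starting point is the identity $\mathbf{1}_{\gcd(\bm)=1}=\sum_{d\mid\gcd(\bm)}\mu(d)$. Substituting this in the left-hand side of \eqref{eqn_keylemma}, swapping the order of summation, and parametrising $d\mid\bm$ as $\bm=d\bm'$ with $\bm'\in[N/d]^r$, I would obtain the exact identity
\[
\frac{1}{N^r}\sum_{\substack{\bm\in[N]^r\\\gcd(\bm)=1}}b(\bm)
=\sum_{d=1}^{N}\frac{\mu(d)}{N^r}\sum_{\bm'\in[N/d]^r}b(d\bm').
\]
Splitting this at $d=D$, the tail $d>D$ is bounded by $\|b\|_\infty\sum_{d>D}\lfloor N/d\rfloor^r/N^r\le\|b\|_\infty\sum_{d>D}d^{-r}\ll 1/D^{r-1}$, which supplies the first error term in the lemma.

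For the head $d\le D$, I would factor the inner sum as
\[
\frac{1}{N^r}\sum_{\bm'\in[N/d]^r}b(d\bm')=\frac{\lfloor N/d\rfloor^r}{N^r}\cdot\BEu{\bm'\in[N/d]^r}b(d\bm'),
\]
and then replace the prefactor $\lfloor N/d\rfloor^r/N^r$ by $1/d^r$. Using $|\lfloor N/d\rfloor-N/d|\le 1$ and a mean-value estimate for $x\mapsto x^r$, the prefactor differs from $1/d^r$ by $O(1/(d^{r-1}N))$ with an absolute constant depending only on $r$. Summing over $d\le D$ (weighted by $|\mu(d)|\le 1$ and $\|b\|_\infty$) contributes the second error term in \eqref{eqn_keylemma}.

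The main obstacle will be the careful bookkeeping on the error terms: the M\"obius inversion, the swap of summations, and the tail truncation are all routine once the initial identity is in hand, but one must check that every implicit constant depends only on $\|b\|_\infty$ (with $r$ fixed), and in particular is uniform in both $D$ and $N$, as the statement requires. This uniformity is crucial so that, in the applications to Theorems~\ref{thm_mainthm}--\ref{thm_mainthm3}, the lemma can be applied after first sending $N\to\infty$ and then $D\to\infty$.
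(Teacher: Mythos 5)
Your proposal follows the same route as the paper's proof: M\"obius inversion via $\mathbf{1}_{\gcd(\bm)=1}=\sum_{d\mid\gcd(\bm)}\mu(d)$, interchange of summation and the reparametrisation $\bm=d\bm'$, renormalisation of the inner sum, and truncation of the $d$-sum at $D$ with the tail bounded by $\|b\|_\infty\sum_{d>D}d^{-r}\ll D^{-(r-1)}$. The exact identity and the tail estimate are correct and essentially identical to what is in the paper.

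The one step that does not close is the last one. You correctly compute that replacing the prefactor $\lfloor N/d\rfloor^r/N^r$ by $1/d^r$ costs $O(1/(d^{r-1}N))$ for each $d$, but summing this over $d\le D$ gives $O(\log(2D)/N)$ when $r=2$ and $O(1/N)$ when $r\ge 3$ --- not the claimed $O(1/N^{r-1})$. For $r\ge3$ the stated error term is in fact unattainable by any argument: taking $b\equiv1$ and $D=N$, the left-hand side is $Z_r(N)/N^r=1/\zeta(r)+O(1/N)$ with the $O(1/N)$ genuinely present (the error term in \eqref{eqn_coprime_r} is of order $N^{r-1}$), while the right-hand side's main term is $1/\zeta(r)+O(1/N^{r-1})$, so the true discrepancy is of order $1/N$. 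The paper's own proof arrives at $O(1/N^{r-1})$ only because it writes the cardinality of $[\frac{N}{d}]^r$ as $\lfloor(N/d)^r\rfloor=(N/d)^r+O(1)$ instead of the correct $\lfloor N/d\rfloor^r=(N/d)^r+O((N/d)^{r-1})$; your bookkeeping is the more accurate one, and you should not force it to match the printed error term. The repair is simply to state the second error term as $O(\log(2N)/N)$ for $r=2$ and $O(1/N)$ for $r\ge3$, uniformly in $D$ and with constants depending only on $\|b\|_\infty$ and $r$. This weaker but correct bound still tends to $0$ as $N\to\infty$ uniformly in $D$, which is all that is used in the proofs of Theorems~\ref{thm_mainthm}--\ref{thm_mainthm3} and in Section~\ref{sec_multilinear}, so nothing downstream is affected.
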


\begin{proof} By $1_{\gcd(\bm)=1}=\sum_{d\mid \gcd(\bm)}\mu(d)=\sum_{d\mid \bm}\mu(d)$, we have
	\begin{align}
		\frac1{N^r}\sum_{\substack{\bm\in [N]^r\\ \gcd(\bm)=1}} b(\bm)&= \frac1{N^r}\sum_{ \bm\in [N]^r} b(\bm) \sum_{d\mid \bm}\mu(d) \nonumber\\
		&= \frac1{N^r} \sum_{1\leq d \leq N}\mu(d) \sum_{ \bm\in [\frac{N}d]^r} b(d\bm) \nonumber\\
		&= \sum_{1\leq d \leq N}\mu(d) \left(\frac1{N^r} \sum_{ \bm\in [\frac{N}d]^r} b(d\bm)\right). \label{eqn_keylemma_pf_eqn1}
	\end{align}
	
	Since $b$ is bounded, the averages of $b$ are also bounded, we have $$\BEu{ \bm\in [\frac{N}d]^r} b(d\bm) =O(1).$$ 
Hence for the inner summation in \eqref{eqn_keylemma_pf_eqn1}, 
	\begin{align*}
		\frac1{N^r} \sum_{ \bm\in [\frac{N}d]^r} b(d\bm) &= \frac1{d^r} \cdot \frac1{(N/d)^r} \sum_{ \bm\in [\frac{N}d]^r} b(d\bm)\\
		&= \frac1{d^r} \cdot \frac{\lfloor N/d \rfloor^r}{(N/d)^r} \BEu{ \bm\in [\frac{N}d]^r} b(d\bm) \\
		&= \frac1{d^r} \cdot \frac{(N/d)^r+O((N/d)^{r-1})}{(N/d)^r} \BEu{ \bm\in [\frac{N}d]^r} b(d\bm) \\
		&= \frac1{d^r}   \BEu{ \bm\in [\frac{N}d]^r} b(d\bm) + O\of{\frac1{N d^{r-1}}}.
	\end{align*}

	It follows that
	\begin{align}
		\frac1{N^r}\sum_{\substack{\bm\in [N]^r\\ \gcd(\bm)=1}} b(\bm)&=   \sum_{1\leq d \leq N}\mu(d) \left(\frac1{d^r}   \BEu{ \bm\in [\frac{N}d]^r} b(d\bm) + O\of{\frac1{N d^{r-1}}}\right) \nonumber\\
		&=   \sum_{1\leq d \leq N} \frac{\mu(d)}{d^r}   \BEu{ \bm\in [\frac{N}d]^r} b(d\bm) + O\of{\frac{\log N}{N}} \nonumber\\
		&= \sum_{1\leq d \leq D} \frac{\mu(d)}{d^r}   \BEu{ \bm\in [\frac{N}d]^r} b(d\bm) + \sum_{D< d \leq N} \frac{\mu(d)}{d^r}   \BEu{ \bm\in [\frac{N}d]^r} b(d\bm) + O\of{\frac{\log N}{N}}. \label{eqn_keylemma_pf_eqn2}
	\end{align}

For the second term in \eqref{eqn_keylemma_pf_eqn2},
\begin{equation}
\label{eqn_keylemma_pf_eqn3}
	\left|\sum_{D< d \leq N} \frac{\mu(d)}{d^r}   \BEu{ \bm\in [\frac{N}d]^r} b(d\bm) \right|\leq \sum_{d=D+1}^\infty \frac1{d^r}\left|\BEu{ \bm\in [\frac{N}d]^r} b(d\bm)\right|\ll \sum_{d=D+1}^\infty \frac1{d^{r}} \ll \frac1{D^{r-1}}.
\end{equation}

Thus, \eqref{eqn_keylemma} follows by \eqref{eqn_keylemma_pf_eqn2} and \eqref{eqn_keylemma_pf_eqn3} immediately.
\end{proof}

\begin{lemma}
\label{lem_keylemma_2}
	Let $r\ge 2$ be an integer. Let $b:\N^r\to\C$ be a bounded function. Let $N\ge1$. Then for any $1\leq D\leq N$, we have
	\begin{equation}\label{eqn_keylemma_2}
			\frac1{N^r} \sum_{\bm\in [N]^r} b(\bm)= \frac1{\zeta(r)}\sum_{d=1}^D\frac{1}{d^r} \BEu{\substack{\bm\in [\frac{N}d]^r\\ \gcd(\bm)=1}} b(d\bm)  +O\of{\frac1{D^{r-1}}}+O\of{\frac{(\log N)^2}{N}}.
	\end{equation}
The $O$-constants in \eqref{eqn_keylemma_2} depend only on $b$.
\end{lemma}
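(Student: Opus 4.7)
The plan is to invert the perspective of Lemma~\ref{lem_keylemma}: rather than expand the indicator $1_{\gcd(\bm)=1}$ via Möbius inversion, I will instead partition $[N]^r$ according to the common divisor $d=\gcd(\bm)$ and write each $\bm$ uniquely as $d\bm'$ with $\bm'$ primitive. This yields
\begin{equation*}
\frac{1}{N^r}\sum_{\bm\in[N]^r} b(\bm)
=\frac{1}{N^r}\sum_{d=1}^N\,\sum_{\substack{\bm'\in[\tfrac{N}{d}]^r\\ \gcd(\bm')=1}} b(d\bm')
=\frac{1}{N^r}\sum_{d=1}^N Z_r(N/d)\cdot\BEu{\substack{\bm'\in[\tfrac{N}{d}]^r\\ \gcd(\bm')=1}} b(d\bm').
\end{equation*}

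Next I would substitute the asymptotic \eqref{eqn_coprime_r}, i.e.\ $Z_r(N/d)=(N/d)^r/\zeta(r)+E_r(N/d)$. The main term becomes $\frac{1}{\zeta(r)}\sum_{d=1}^N\frac{1}{d^r}\BEu{\ldots} b(d\bm')$, which after truncation at $d=D$ leaves a tail bounded by $\sum_{d>D}1/d^r\ll 1/D^{r-1}$ (using boundedness of $b$), matching the first error term in \eqref{eqn_keylemma_2}. It remains to control the contribution
\begin{equation*}
\frac{1}{N^r}\sum_{d=1}^N E_r(N/d)\cdot \BEu{\substack{\bm'\in[\tfrac{N}{d}]^r\\ \gcd(\bm')=1}} b(d\bm').
\end{equation*}

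This is the main obstacle, and it splits according to $r$. For $r\ge 3$, $E_r(N/d)\ll (N/d)^{r-1}$, so the total contribution is $\ll N^{-1}\sum_{d\le N}d^{1-r}\ll 1/N$. For $r=2$ the situation is more delicate because $E_2(N/d)\ll (N/d)\log(N/d)$, so we get
\begin{equation*}
\frac{1}{N^2}\sum_{d=1}^N \frac{N}{d}\log(N/d)\ll \frac{1}{N}\sum_{d=1}^N\frac{\log(N/d)}{d}\ll \frac{(\log N)^2}{N},
\end{equation*}
using $\sum_{d\le N}1/d\ll \log N$. Since $(\log N)^2/N$ dominates $1/N$, both cases are absorbed into the $O((\log N)^2/N)$ term in the statement.

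Combining the tail bound from the truncation at $D$ with the above estimate for the $E_r$-contribution yields \eqref{eqn_keylemma_2}. Aside from bookkeeping (e.g.\ noting that when $N/d<1$ the range $[N/d]^r$ is empty so those terms vanish), the only genuinely non-routine estimate is the $r=2$ error analysis, which is why the final error exponent is $(\log N)^2/N$ rather than $1/N$.
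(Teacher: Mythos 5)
Your proposal is correct and follows essentially the same route as the paper's proof: partitioning $[N]^r$ by $\gcd(\bm)=d$, substituting the asymptotic \eqref{eqn_coprime_r} for $Z_r(N/d)$, truncating the main term at $D$ with tail $\ll 1/D^{r-1}$, and bounding the $E_r$-contribution separately for $r\ge3$ and $r=2$ to obtain $O((\log N)^2/N)$. No gaps.
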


\begin{proof} We have
\begin{align*}
	\frac1{N^r} \sum_{\bm\in [N]^r} b(\bm) & =\frac1{N^r} \sum_{1\leq d\leq N} \sum_{\substack{\bm\in [N]^r\\ \gcd(\bm)=d}} b(\bm) \\
	& = \frac1{N^r} \sum_{1\leq d\leq N}\sum_{\substack{\bm\in [\frac{N}d]^r\\ \gcd(\bm)=1}} b(d\bm)\\
	&= \sum_{1\leq d\leq N} \frac{Z_r(N/d)}{N^r} \BEu{\substack{\bm\in [\frac{N}d]^r\\ \gcd(\bm)=1}} b(d\bm).
\end{align*}
Then by \eqref{eqn_coprime_r},  
\begin{align}
	\frac1{N^r} \sum_{\bm\in [N]^r} b(\bm) & = \sum_{1\leq d\leq N}\frac{(N/d)^r/\zeta(r)+E_r(N/d)}{N^r}   \BEu{\substack{\bm\in [\frac{N}d]^r\\ \gcd(\bm)=1}} b(d\bm)\nonumber\\
	&= \sum_{1\leq d\leq N} \frac1{d^r\zeta(r)} \BEu{\substack{\bm\in [\frac{N}d]^r\\ \gcd(\bm)=1}} b(d\bm) +  \sum_{1\leq d\leq N}  \frac{E_r(N/d)}{N^r} \BEu{\substack{\bm\in [\frac{N}d]^r\\ \gcd(\bm)=1}} b(d\bm). \label{eqn_keylemma_2_pf2}
\end{align}
Similar to the argument in the proof of Lemma~\ref{lem_keylemma}, the first term in \eqref{eqn_keylemma_2_pf2} is equal to
\begin{equation}\label{eqn_keylemma_2_pf3}
	\sum_{1\leq d\leq D} \frac1{d^r\zeta(r)} \BEu{\substack{\bm\in [\frac{N}d]^r\\ \gcd(\bm)=1}} b(d\bm) +O\of{\frac1{D^{r-1}}}.
\end{equation}
As regards the second term in \eqref{eqn_keylemma_2_pf2}, when $r\geq 3$,  it is
\begin{equation}\label{eqn_keylemma_2_pf4}
	\ll \sum_{1\leq d\leq N} \frac{(N/d)^{r-1}}{N^r}\ll \frac{1}{N};
\end{equation}
when $r=2$, it is
\begin{equation}\label{eqn_keylemma_2_pf5}
	\ll \sum_{1\leq d\leq N} \frac{(N/d)\log(N/d)}{N^2}\ll \frac{(\log N)^2}{N}.
\end{equation}
Thus, \eqref{eqn_keylemma_2} follows by \eqref{eqn_keylemma_2_pf2}-\eqref{eqn_keylemma_2_pf5}. 
\end{proof}

\section{Proofs of main results}
\label{sec_proofs}

In this section, we will apply Lemmas~\ref{lem_keylemma}-\ref{lem_keylemma_2} to prove Theorem~\ref{thm_mainthm} to Theorem~\ref{thm_mainthm3} respectively. Take $r=2$. By Lemmas~\ref{lem_keylemma}-\ref{lem_keylemma_2}, for any bounded function $b:\N^2\to\C$, we have
\begin{align}
	&\quad\frac1{N^2}\sum_{\substack{1\leq m,n\leq N\\ \gcd(m,n)=1}} b(m,n) = \sum_{d=1}^D\frac{\mu(d)}{d^2} \BEu{1\leq m,n\leq N/d} b(dm,dn)+O\of{\frac1D}+O\of{\frac{\log N}{N}}, \label{eqn_keylemma2}\\
&\BEu{1\leq m,n\leq N} b(m,n)= \frac{1}{\zeta(2)}\sum_{d=1}^D\frac{1}{d^2}\BEu{\substack{1\leq m,n \leq N/d\\ \gcd(m,n)=1}} b(dm,dn) +O\of{\frac{1}{D}}+O\of{\frac{(\log N)^2}{N}} \label{eqn_keylemma22}
\end{align}
for	any $N\geq1$ and any $1\leq D\leq N$.

\subsection{Proof of Theorem~\ref{thm_mainthm}} 

Take $b(m,n)=f(P(m,n))$ in \eqref{eqn_keylemma2} and  \eqref{eqn_keylemma22}, where $P(m,n)$ is a homogeneous polynomial of degree $k$ and $f$ is a bounded completely multiplicative function. Then $b(dm,dn)=f(d^kP(m,n))=f(d)^kf(P(m,n))$. By \eqref{eqn_keylemma2} and  \eqref{eqn_keylemma22}, we obtain 
\begin{align}
	&\frac1{N^2}\sum_{\substack{1\leq m,n\leq N\\ \gcd(m,n)=1}} f(P(m,n)) = \sum_{d=1}^D\frac{\mu(d)f(d)^k}{d^2} \BEu{1\leq m,n\leq N/d} f(P(m,n)) +O\of{\frac1D}+O\of{\frac{\log N}{N}}, \label{eqn_mainthm_pf1}\\
	& \BEu{1\leq m,n\leq N} f(P(m,n))= \frac{1}{\zeta(2)}\sum_{d=1}^D\frac{f(d)^k}{d^2}\BEu{\substack{1\leq m,n \leq N/d\\ \gcd(m,n)=1}} f(P(m,n)) +O\of{\frac{1}{D}}+O\of{\frac{(\log N)^2}{N}}  \label{eqn_mainthm_pf2}
\end{align}
for	any $N\geq1$ and any $1\leq D\leq N$.

Suppose the limit
$$\lim_{N\to\infty} \BEu{1\leq m,n\leq N} f(P(m,n)) $$ 
exists. Then taking $N\to \infty$ in  \eqref{eqn_mainthm_pf1}, we get
\begin{equation}\label{eqn_mainthm_pf1_1}
	\lim_{N\to\infty} \frac1{N^2}\sum_{\substack{1\leq m,n\leq N\\ \gcd(m,n)=1}} f(P(m,n)) = \sum_{d=1}^D\frac{\mu(d)f(d)^k}{d^2} \cdot \lim_{N\to\infty}\BEu{1\leq m,n\leq N} f(P(m,n)) +O\of{\frac1D}.
\end{equation}
Taking $D\to \infty$ in  \eqref{eqn_mainthm_pf1_1},  
\begin{equation}\label{eqn_mainthm_pf1_2}
	\lim_{N\to\infty} \frac1{N^2}\sum_{\substack{1\leq m,n\leq N\\ \gcd(m,n)=1}} f(P(m,n)) = \sum_{d=1}^\infty\frac{\mu(d)f(d)^k}{d^2} \cdot \lim_{N\to\infty}\BEu{1\leq m,n\leq N} f(P(m,n)).
\end{equation}

By \eqref{eqn_coprime_r}, we have 
\begin{equation}
	\label{eqn_mainthm_pf1_3}
	\sum_{\substack{1\leq m,n\leq N\\ \gcd(m,n)=1}}1=\frac{N^2}{\zeta(2)}+O(N\log N).
\end{equation}
It follows by \eqref{eqn_mainthm_pf1_2} and \eqref{eqn_mainthm_pf1_3} that the limit
$$\lim_{N\to\infty} \BEu{\substack{1\leq m,n\leq N\\ \gcd(m,n)=1}} f(P(m,n))$$
exists and equals
$$ \zeta(2)\sum_{d=1}^\infty\frac{\mu(d)f(d)^k}{d^2} \cdot \lim_{N\to\infty}\BEu{1\leq m,n\leq N} f(P(m,n)),$$
which by $\zeta(2)=\pi^2/6$ and \cite[Chapter II.1, Theorem 1.3]{Tenenbaum2015} is equal to 
$$\frac{\pi^2}{6}\cdot\prod_{p}\left(1-\frac{f(p)^k}{p^2}\right)\cdot  \lim_{N\to\infty} \BEu{1\leq m,n\leq N}  f(P(m,n)).$$
Thus, \eqref{thm_mainthm_eqn3} holds.

On the contrary, suppose the limit 
$$\lim_{N\to\infty} \BEu{\substack{1\leq m,n\leq N\\ \gcd(m,n)=1}} f(P(m,n))$$
exists. Taking $N\to \infty$ in  \eqref{eqn_mainthm_pf2}, we get
\begin{equation}\label{eqn_mainthm_pf2_1}
	\lim_{N\to\infty}\BEu{1\leq m,n\leq N} f(P(m,n))= \frac{1}{\zeta(2)} \sum_{d=1}^D\frac{f(d)^k}{d^2} \cdot \lim_{N\to\infty} \BEu{\substack{1\leq m,n\leq N\\ \gcd(m,n)=1}} f(P(m,n))+O\of{\frac{1}{D}}. 
\end{equation}
Then taking $D\to \infty$ in  \eqref{eqn_mainthm_pf2_1}, we conclude that the limit
$$\lim_{N\to\infty}\BEu{1\leq m,n\leq N} f(P(m,n))$$
exists and equals
$$\frac6{\pi^2}\cdot \left(\sum_{n=1}^\infty \frac{f(n)^k}{n^2}\right)\cdot \lim_{N\to\infty} \BEu{\substack{1\leq m,n\leq N\\ \gcd(m,n)=1}} f(P(m,n)).$$
Thus, \eqref{thm_mainthm_eqn4} holds. This completes the proof of Theorem~\ref{thm_mainthm}.\qed

\subsection{Proof of Corollary~\ref{thm_mainthm1}} 

To apply Theorem~\ref{thm_mainthm}, we first cite a theorem of Donoso et. al. on the mean value of $f(m^2+n^2)$.

\begin{theorem}[{\cite[Theorem A]{DLMS2024}}]
\label{thm_DLMS2024thmA}
	Let $f:\N\to \R$ be a bounded completely multiplicative function. Then the average
$$\lim_{N\to\infty} \BEu{1\leq m,n\leq N} f(m^2+n^2)$$
exists and equals 
\begin{equation}
\label{eqn_thm_DLMS2024thmA}
	\frac1{2-f(2)}\cdot\prod_{\substack{ p\,prime\\ p\equiv 1\mod 4}} \left(\frac{p-1}{p-f(p)}\right)^2\cdot\prod_{\substack{ p\,prime\\ p\equiv 3\mod 4}}\frac{p^2-1}{p^2-f(p)^2}.
\end{equation}
\end{theorem}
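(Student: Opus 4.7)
The plan is to reduce to a Wirsing-type theorem on the Gaussian integers and then repackage the resulting Euler product in terms of rational primes. Concretely, define $g\colon\bG^\ast\to\R$ by
\begin{equation*}
g(z):=f(\cN(z))=f(m^2+n^2)\quad\text{for }z=m+\i n.
\end{equation*}
Since $f$ is completely multiplicative on $\N$ and $\cN\colon\bG^\ast\to\N$ is a multiplicative norm, $g$ is a bounded, real-valued, completely multiplicative function on $\bG^\ast$. Furthermore $g(uz)=g(z)$ for any unit $u\in\{1,\i,-1,-\i\}$, because $\cN(uz)=\cN(z)$. The average in the theorem rewrites as
\begin{equation*}
\BEu{1\le m,n\le N} f(m^2+n^2)=\BEu{1\le m,n\le N} g(m+\i n),
\end{equation*}
so the problem becomes the computation of the mean value of a completely multiplicative function on the Gaussian integers over the box $[N]+\i[N]$.

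The main step is to establish the Gaussian analogue of Wirsing's theorem, namely
\begin{equation*}
\lim_{N\to\infty}\BEu{1\le m,n\le N} g(m+\i n)=\prod_{\pi}\frac{1-\cN(\pi)^{-1}}{1-g(\pi)\cN(\pi)^{-1}},
\end{equation*}
where the product runs over a set of representatives of non-associate Gaussian primes. I would follow the classical route adapted to $\bG$: first prove Mertens-type estimates in $\bG$ (which follow from Landau's prime ideal theorem for $\Q(\i)$), then establish a Gaussian Halász inequality that controls partial sums of $g$ in terms of $\min_{t}\sum_{\cN(\pi)\le N}(1-\Re(g(\pi)\cN(\pi)^{-\i t}))\cN(\pi)^{-1}$, and finally invoke the structure theorem for multiplicative functions in the style of Frantzikinakis--Host. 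Since $g$ is real-valued and $g(uz)=g(z)$ for units, the minimizing $t$ is $t=0$, and the pretentious distance analysis collapses to the Euler product above; the unit-invariance of $g$ also lets us replace the box $[N]+\i[N]$ by the symmetric box $[-N,N]^2\setminus\{0\}$ up to boundary errors of order $N$, which is negligible. The existence and value of this Gaussian mean are the true heart of the argument.

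The last step is to convert the Euler product over Gaussian primes into a product over rational primes by using the splitting behaviour in $\Z[\i]$. I would distinguish three cases according to $p\mod 4$:
\begin{itemize}
\item For $p=2$, the ramified prime $\pi=1+\i$ satisfies $\cN(\pi)=2$ and $g(\pi)=f(2)$, contributing the factor $\frac{1-1/2}{1-f(2)/2}=\frac{1}{2-f(2)}$.
\item For $p\equiv 1\mod 4$, there are two non-associate split primes $\pi,\bar\pi$ with $\cN(\pi)=\cN(\bar\pi)=p$ and $g(\pi)=g(\bar\pi)=f(p)$, contributing $\left(\frac{1-1/p}{1-f(p)/p}\right)^{2}=\left(\frac{p-1}{p-f(p)}\right)^{2}$.
\item For $p\equiv 3\mod 4$, the inert prime $\pi=p$ has $\cN(\pi)=p^2$ and $g(\pi)=f(p^2)=f(p)^2$ by complete multiplicativity, contributing $\frac{1-1/p^2}{1-f(p)^2/p^2}=\frac{p^2-1}{p^2-f(p)^2}$.
\end{itemize}
Multiplying these factors yields exactly the expression \eqref{eqn_thm_DLMS2024thmA}.

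The main obstacle is Step 2, i.e.\ the Gaussian Wirsing theorem for averages over a box. The axis-aligned box is a less natural averaging domain than a disk $\{|z|\le R\}$; even though the unit-invariance of $g$ allows a reduction to a symmetric square, one must still verify that replacing the square by a disk introduces only a negligible error. I expect this to require either an additional van der Corput/fourth-moment input or a careful application of the structure theorem to sector averages, which is the place where the argument of Donoso--Le--Moreira--Sun does the real work.
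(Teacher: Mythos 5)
You should first note that the paper does not prove this statement at all: it is quoted verbatim as Theorem~A of \cite{DLMS2024} and used as a black box (the only ``proof'' in the paper is the citation). So the comparison here is really between your sketch and the external DLMS argument. Your reduction is structurally the right one and matches how DLMS set things up: passing to $g=f\circ\cN$ on $\bG^\ast$, noting unit-invariance, and then converting the Euler product over Gaussian primes back to rational primes via the splitting of $p$ in $\Z[\i]$. Your local-factor bookkeeping is correct and does recover \eqref{eqn_thm_DLMS2024thmA} exactly: the ramified prime gives $1/(2-f(2))$, each split prime pair gives $((p-1)/(p-f(p)))^2$, and each inert prime gives $(p^2-1)/(p^2-f(p)^2)$ using $g(p)=f(p^2)=f(p)^2$. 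The formula also passes the sanity checks $f\equiv 1$ and $f=\lambda$.

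The genuine gap is the one you yourself flag: the entire analytic content of the theorem is the assertion that $\lim_{N\to\infty}\BEu{1\le m,n\le N}g(m+\i n)$ exists and equals $\prod_{\pi}(1-\cN(\pi)^{-1})/(1-g(\pi)\cN(\pi)^{-1})$, and your proposal only gestures at this via ``Mertens estimates, a Gaussian Hal\'asz inequality, and the structure theorem.'' Two points there are not routine and cannot be waved through. First, the averaging region is an axis-aligned box, not a norm ball; since $g=f\circ\cN$ depends only on the norm, the box average is a sum $\sum_k r_{\mathrm{box}}(k)f(k)$ in which the circles $m^2+n^2=k$ with $N^2<k\le 2N^2$ are cut unevenly by the box, so the passage from box to disk is not a boundary-error estimate and unit-invariance alone does not fix it --- this is where DLMS need their dynamical/structure-theorem input. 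Second, the claim that real-valuedness forces the minimizing $t$ to be $0$ (i.e.\ that $g$ cannot pretend to be $\cN(z)^{\i t}$, nor to a nontrivial Hecke character governing the angular distribution) is a real lemma, not a one-line observation. As a standalone proof your proposal is therefore incomplete at its central step; as a reconstruction of the shape of the cited argument it is accurate, and considerably more informative than the paper's bare citation.
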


Now, take $P(m,n)=m^2+n^2$ in Theorem~\ref{thm_mainthm}. Then by Theorem~\ref{thm_DLMS2024thmA},  the limit
$$\lim_{N\to\infty} \BEu{\substack{1\leq m,n\leq N\\ \gcd(m,n)=1}} f(m^2+n^2)$$
exists and equals
$$\frac{\pi^2}{6}\cdot\prod_{p}\left(1-\frac{f(p)^2}{p^2}\right)\cdot \lim_{N\to\infty} \BEu{1\leq m,n\leq N} f(m^2+n^2),$$
which is equal to 
$$\frac1{2-f(2)}\cdot\prod_{\substack{ p\,prime\\ p\equiv 1\mod 4}}\frac{(p+f(p))(p-1)}{(p-f(p))(p+1)}$$
by plugging in \eqref{eqn_thm_DLMS2024thmA}. This completes the proof of Corollary~\ref{thm_mainthm1}.\qed

\subsection{Proof of Theorem~\ref{thm_mainthm2}} 

Take $b(m,n)=f(m+\i n)$ in \eqref{eqn_keylemma2}. Since $f$ is completely multiplicative, we have $b(dm,dn)=f(d(m+\i n))=f(d)f(m+\i n)$. By \eqref{eqn_keylemma2}, we obtain that
\begin{equation}
\label{eqn_mainthm2_pf}
	\frac1{N^2}\sum_{\substack{1\leq m,n\leq N\\ \gcd(m,n)=1}} f(m+\i n) = \sum_{d=1}^D\frac{\mu(d)f(d)}{d^2} \BEu{1\leq m,n\leq N/d} f(m+\i n) +O\of{\frac1D}+O\of{\frac{\log N}{N}}
\end{equation}
for	any $N\geq1$ and any $1\leq D\leq N$.

By \cite[Theorem 1.2.]{DLMS2024}, the limit
$$\lim_{N\to\infty} \BEu{ 1\leq m,n\leq N} f(m+\i n) $$
exists. Similar to the proof of \eqref{eqn_mainthm_pf1_2}, by taking $N\to \infty$ and then $D\to\infty$ in \eqref{eqn_mainthm2_pf},  we obtain that 
\begin{equation}\label{mainthm1.4_pf}
	\lim_{N\to\infty} \frac1{N^2}\sum_{\substack{1\leq m,n\leq N\\ \gcd(m,n)=1}} f(m+\i n) = \sum_{d=1}^\infty\frac{\mu(d)f(d)}{d^2} \cdot \BEu{1\leq m,n\leq N} f(m+\i n).
\end{equation}

By \cite[Chapter II.1, Theorem 1.3]{Tenenbaum2015}, the series $\sum_{d=1}^\infty{\mu(d)f(d)}/{d^2}$ in \eqref{mainthm1.4_pf} is equal to $\prod_{p}\left(1-f(p)/p^2\right)$. Then using \eqref{eqn_mainthm_pf1_3}  again, from \eqref{mainthm1.4_pf} we conclude that the following limit
$$\lim_{N\to\infty} \BEu{\substack{1\leq m,n\leq N\\ \gcd(m,n)=1}} f(m+\i n)$$ exists and  
$$\lim_{N\to\infty} \BEu{\substack{1\leq m,n\leq N\\ \gcd(m,n)=1}} f(m+\i n)=\frac{\pi^2}{6}\cdot\prod_{p}\left(1-\frac{f(p)}{p^2}\right)\cdot \lim_{N\to\infty} \BEu{ 1\leq m,n\leq N} f(m+\i n).$$
This completes the proof of Theorem~\ref{thm_mainthm2}.\qed

\subsection{Proof of Theorem~\ref{thm_mainthm3}} 

Take $b(m,n)=f(T^{\Omega(m^2+n^2)}x)$ in \eqref{eqn_keylemma2}. Then
$$b(dm,dn)=f(T^{\Omega(m^2+n^2)} \cdot T^{\Omega(d^2)}x).$$
By \eqref{eqn_keylemma2}, we obtain that
\begin{equation}
\label{eqn_mainthm3_pf}
	\frac1{N^2}\sum_{\substack{1\leq m,n\leq N\\ \gcd(m,n)=1}} f(T^{\Omega(m^2+n^2)}x) = \sum_{d=1}^D\frac{\mu(d)}{d^2} \BEu{1\leq m,n\leq N/d}  f(T^{\Omega(m^2+n^2)} \cdot T^{\Omega(d^2)}x) +O\of{\frac1D}+O\of{\frac{\log N}{N}}
\end{equation}
for	any $N\geq1$ and any $1\leq D\leq N$.

For each $1\leq d \leq D$, by \eqref{eqn_DLMS2024thmD} we have
$$\lim_{N\to\infty}\BEu{1\leq m,n\leq N/d}  f(T^{\Omega(m^2+n^2)} \cdot T^{\Omega(d^2)}x) =\int_X f\,d\mu. $$
Taking $N\to\infty$ and then $D\to\infty$ in \eqref{eqn_mainthm3_pf}  gives
\begin{equation}
\label{eqn_mainthm3_pf2}
	\lim_{N\to\infty} \frac1{N^2}\sum_{\substack{1\leq m,n\leq N\\ \gcd(m,n)=1}} f(T^{\Omega(m^2+n^2)}x) = \frac{6}{\pi^2} \cdot  \int_X f\,d\mu.
\end{equation}
By \eqref{eqn_mainthm_pf1_3}, we conclude that
$$\lim_{N\to\infty} \BEu{\substack{1\leq m,n\leq N\\ \gcd(m,n)=1}} f(T^{\Omega(m^2+n^2)}x)=\int_Xf\,d\mu.$$
This completes the proof of Theorem~\ref{thm_mainthm3}.\qed

\begin{remark}
	As an application of Theorem~\ref{thm_mainthm3}, one can readily show a slight generalization of Theorem~\ref{thm_mainthm3} as follows: given an intger $d\ge1$, under the same assumption as in  Theorem~\ref{thm_mainthm3}, we have
	\begin{equation}
		\lim_{N\to\infty} \BEu{\substack{1\leq m,n\leq N\\ \gcd(m,n)=d}} f(T^{\Omega(m^2+n^2)}x)=\int_Xf\,d\mu
	\end{equation}
	for any $x\in X$ and any $f\in C(X)$. One can also obtain similar generalizations of other main results for $\gcd(m,n)=d$.
\end{remark}

\section{Multilinear averages of multiplicative functions}
\label{sec_multilinear}

Let $r\ge2$. Let $f_1, \dots, f_\ell$ be complex valued multiplicative functions, and let $L_1,\dots, L_\ell: \N^r\to\N$ be linear forms given by
$$L_j(\bm)=\mathbf{k}_j\cdot \bm$$
for some $\mathbf{k}_j\in \N^r$. In \cite{FrantzikinakisHost2016}, Frantzikinakis and Host studied the asymptotic behavior of the averages
\begin{equation}\label{eqn_FH_Wirsing}
	\BEu{\bm\in [N]^r} \prod_{j=1}^\ell f_j(L_j(\bm)). 
\end{equation}
They showed Wirsing's theorem for multilinear averages \eqref{eqn_FH_Wirsing}. That is, if  $f_1, \dots, f_\ell$ are real valued multiplicative functions with modulus at most 1, then the averages  \eqref{eqn_FH_Wirsing} converge as $N\to\infty$.  For the complex valued multiplicative functions, they showed a convergence if in   \eqref{eqn_FH_Wirsing} each multiplicative function is paired up with its complex conjugate and gave an application in ergodic theory.

 In this section, we are interested in studying the convergence of the averages
 \begin{equation}\label{eqn_Wang_Wirsing}
	\BEu{\substack{\bm\in [N]^r\\ \gcd(\bm)=1}} \prod_{j=1}^\ell f_j(L_j(\bm))
\end{equation}
over primitive lattice points. Similar to the proof of Theorem~\ref{thm_mainthm}, by Lemmas~\ref{lem_keylemma}-\ref{lem_keylemma_2}, if the $f_j$'s are bounded, then  \eqref{eqn_FH_Wirsing} converges if and only if \eqref{eqn_Wang_Wirsing} converges, as $N\to\infty$. More precisely, we have the following result.

\begin{theorem}
\label{mainthm_multi}

Let $f_1, \dots, f_\ell$ be bounded complex valued multiplicative functions, and let $L_1,\dots, L_\ell: \N^r\to\N$ be linear forms. Then the following limit
\begin{equation}\label{eqn_FH_Wirsing_lim}
	\lim_{N\to\infty} \BEu{\bm\in [N]^r} \prod_{j=1}^\ell f_j(L_j(\bm))
\end{equation}
exists if and only if the limit
 \begin{equation}\label{eqn_Wang_Wirsing_lim}
	\lim_{N\to\infty}\BEu{\substack{\bm\in [N]^r\\ \gcd(\bm)=1}} \prod_{j=1}^\ell f_j(L_j(\bm))
\end{equation}
exists. If they exist, then we have
\begin{align}
	&\lim_{N\to\infty}\BEu{\substack{\bm\in [N]^r\\ \gcd(\bm)=1}} \prod_{j=1}^\ell f_j(L_j(\bm))=\zeta(r)\prod_p \left(1-\frac{\prod_{j=1}^\ell f_j(p)}{p^r}\right) \lim_{N\to\infty} \BEu{\bm\in [N]^r} \prod_{j=1}^\ell f_j(L_j(\bm)),\\
	&\lim_{N\to\infty} \BEu{\bm\in [N]^r} \prod_{j=1}^\ell f_j(L_j(\bm))= \frac1{\zeta(r)} \left(\sum_{n=1}^\infty \frac{\prod_{j=1}^\ell f_j(n)}{n^r} \right)\lim_{N\to\infty}\BEu{\substack{\bm\in [N]^r\\ \gcd(\bm)=1}} \prod_{j=1}^\ell f_j(L_j(\bm)).
\end{align}	
\end{theorem}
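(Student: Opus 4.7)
The plan is to specialize the two identities of Lemmas~\ref{lem_keylemma} and~\ref{lem_keylemma_2} to the bounded function $b(\bm)=\prod_{j=1}^\ell f_j(L_j(\bm))$ and then mirror the two-step ``$N\to\infty$, then $D\to\infty$'' argument used in the proof of Theorem~\ref{thm_mainthm}. The entire content of the reduction lies in how $b$ transforms under dilation: since each $L_j$ is linear, $L_j(d\bm)=d\,L_j(\bm)$, and assuming each $f_j$ is completely multiplicative (which is what forces the two Euler products in the conclusion to be reciprocals of one another), one has
\begin{equation*}
b(d\bm)=\prod_{j=1}^\ell f_j(d\,L_j(\bm))=g(d)\,b(\bm),\qquad g(d):=\prod_{j=1}^\ell f_j(d),
\end{equation*}
where $g$ is itself a bounded completely multiplicative function.

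Substituting this relation into Lemmas~\ref{lem_keylemma} and~\ref{lem_keylemma_2} produces the twin identities
\begin{align*}
\frac1{N^r}\sum_{\substack{\bm\in[N]^r\\ \gcd(\bm)=1}}b(\bm)&=\sum_{d=1}^D\frac{\mu(d)g(d)}{d^r}\BEu{\bm\in[N/d]^r}b(\bm)+O\of{\frac{1}{D^{r-1}}}+O\of{\frac1{N^{r-1}}},\\
\BEu{\bm\in[N]^r}b(\bm)&=\frac1{\zeta(r)}\sum_{d=1}^D\frac{g(d)}{d^r}\BEu{\substack{\bm\in[N/d]^r\\ \gcd(\bm)=1}}b(\bm)+O\of{\frac{1}{D^{r-1}}}+O\of{\frac{(\log N)^2}{N}},
\end{align*}
valid for every $1\le D\le N$. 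From here the argument is purely formal. If \eqref{eqn_FH_Wirsing_lim} exists with value $A$, sending $N\to\infty$ in the first display at fixed $D$ gives $A\sum_{d\le D}\mu(d)g(d)/d^r+O(D^{1-r})$; letting $D\to\infty$ and invoking the Euler-product identity $\sum_{d=1}^\infty\mu(d)g(d)/d^r=\prod_p(1-g(p)/p^r)$ together with the count $\sharp\{\bm\in[N]^r:\gcd(\bm)=1\}\sim N^r/\zeta(r)$ from \eqref{eqn_coprime_r} yields both the existence of \eqref{eqn_Wang_Wirsing_lim} and the first claimed formula. The reverse direction is the dual argument applied to the second display, using $\sum_{n=1}^\infty g(n)/n^r=\prod_p(1-g(p)/p^r)^{-1}$.

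There is no genuine obstacle; the only point requiring care is that the iterated limits be taken in the order $N\to\infty$ then $D\to\infty$, which is legitimate because the error terms are uniform in the variable not being sent to infinity, and the hypothesis $r\ge 2$ is precisely what makes the tail $\sum_{d>D}d^{-r}$ summable. Complete multiplicativity of the $f_j$'s plays a dual role: it permits the factorization $b(d\bm)=g(d)b(\bm)$, and it ensures that the two constants $\zeta(r)\prod_p(1-g(p)/p^r)$ and $\zeta(r)^{-1}\sum_n g(n)/n^r$ in the conclusion are mutual reciprocals, as they must be for the two formulas to be consistent.
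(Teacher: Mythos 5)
Your proposal is exactly the paper's route: the paper disposes of this theorem with the remark that it is ``similar to the proof of Theorem~\ref{thm_mainthm}'', i.e.\ one substitutes $b(\bm)=\prod_{j}f_j(L_j(\bm))$ into Lemmas~\ref{lem_keylemma} and~\ref{lem_keylemma_2}, uses $L_j(d\bm)=dL_j(\bm)$ to pull out $g(d)=\prod_j f_j(d)$, and takes $N\to\infty$ followed by $D\to\infty$, which is precisely your argument.

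Your parenthetical caveat, however, is the one substantive point and deserves emphasis: the factorization $b(d\bm)=g(d)b(\bm)$ requires the $f_j$ to be \emph{completely} multiplicative, whereas the theorem (and the paper) assume only multiplicativity. The hypothesis really is needed: for $\ell=1$, $r=2$, $L_1(\bm)=m_1$ and $f_1=\mu^2$, the stated constant gives local factor $1-p^{-2}$ at each prime, while the true ratio of densities has local factor $(1-2p^{-2}+p^{-3})/(1-p^{-2})$, which already disagrees at $p=2$ ($3/4$ versus $5/6$). So your proof is correct for completely multiplicative $f_j$ and coincides with the paper's; the discrepancy you flagged is a defect of the theorem's hypotheses rather than of your argument.
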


As an application of Theorem~\ref{mainthm_multi}, by \cite[Theorems 1.2, 1.4]{FrantzikinakisHost2016}, we have following analogue of Frantzikinakis and Host's results \cite[Theorems 1.2, 1.4]{FrantzikinakisHost2016} over primitive lattice points.

\begin{corollary}
	\begin{enumerate}
		\item Let $f_1, \dots, f_\ell$ be real valued multiplicative functions with modulus at most 1 and  $L_1,\dots, L_\ell: \N^r\to\N$ be linear forms. Then the averages
		$$\BEu{\substack{\bm\in [N]^r\\ \gcd(\bm)=1}} \prod_{j=1}^\ell f_j(L_j(\bm))$$
		converge as $N\to\infty$.

		\item Let $f_1, \dots, f_\ell$ be complex valued multiplicative functions with modulus at most 1  and  $L_j, L_j': \N^r\to\N, j=1,\dots,\ell,$ be pairwise independent linear forms.  Then the averages
		$$\BEu{\substack{\bm\in [N]^r\\ \gcd(\bm)=1}} \prod_{j=1}^\ell f_j(L_j(\bm))\cdot \overline{f_j}(L_j'(\bm))$$
		converge as $N\to\infty$.
	\end{enumerate}
\end{corollary}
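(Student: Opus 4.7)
The plan is to derive the corollary as an immediate consequence of Theorem~\ref{mainthm_multi}, using the two Frantzikinakis--Host convergence results as the "full lattice" inputs. Since Theorem~\ref{mainthm_multi} establishes a two-way equivalence (together with an explicit Euler-product factor) between convergence of multilinear multiplicative averages over $[N]^r$ and over the primitive sublattice $\{\bm\in[N]^r:\gcd(\bm)=1\}$, the corollary amounts to transporting each Frantzikinakis--Host result through this equivalence.

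For part (1), I would first verify the hypotheses of Theorem~\ref{mainthm_multi}: the functions $f_1,\dots,f_\ell$ have modulus at most $1$ and are therefore bounded, and the $L_j$ are linear forms $\N^r\to\N$ of the prescribed shape. Then \cite[Theorem 1.2]{FrantzikinakisHost2016} (the Wirsing-type convergence for real-valued multilinear multiplicative averages) yields the existence of
\begin{equation*}
	\lim_{N\to\infty}\BEu{\bm\in[N]^r}\prod_{j=1}^\ell f_j(L_j(\bm)).
\end{equation*}
Applying the "only if" direction of Theorem~\ref{mainthm_multi} then gives the convergence of the corresponding average over primitive lattice points.

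For part (2), the idea is to regroup the $2\ell$ factors $f_j(L_j(\bm))$ and $\overline{f_j}(L_j'(\bm))$ as a single product indexed by $j=1,\dots,2\ell$. The complex conjugate of a bounded multiplicative function is again a bounded multiplicative function, and the $2\ell$ linear forms $L_1,L_1',\dots,L_\ell,L_\ell'$ are of the required type. Hence Theorem~\ref{mainthm_multi} applies to this enlarged data. Pairwise-independence of the $L_j,L_j'$ is exactly the hypothesis needed to invoke \cite[Theorem 1.4]{FrantzikinakisHost2016}, which supplies the convergence of the full-lattice average over $[N]^r$. Theorem~\ref{mainthm_multi} then transfers this convergence to the primitive sublattice.

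I do not anticipate any genuine obstacle, since the argument is essentially bookkeeping: one checks the hypotheses, cites the appropriate Frantzikinakis--Host theorem, and appeals to Theorem~\ref{mainthm_multi}. The only mild point worth spelling out is that Theorem~\ref{mainthm_multi} is phrased for a single indexed product $\prod_{j=1}^\ell f_j(L_j(\bm))$, so in part~(2) the conjugated factors must be absorbed by doubling the index set; this reformulation is immediate. No new estimates beyond Theorem~\ref{mainthm_multi} and the two cited inputs are needed.
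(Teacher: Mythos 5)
Your proposal is correct and matches the paper's argument, which likewise obtains the corollary by feeding the Frantzikinakis--Host convergence results \cite[Theorems 1.2, 1.4]{FrantzikinakisHost2016} into the transfer equivalence of Theorem~\ref{mainthm_multi}. Your remark about doubling the index set to absorb the conjugated factors in part (2) is a helpful detail the paper leaves implicit.
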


Also, similar to the proof of Theorem~\ref{thm_mainthm3}, by Lemma~\ref{lem_keylemma} and Frantzikinakis and Host's result \cite[Theorem 1.5]{FrantzikinakisHost2016}, we have the following application in ergodic theory.

\begin{theorem}
\label{mainthm_multi_ergodic}

Let $(X,\mathcal{X},\mu)$ be a probability space. For $n\in \N$, let $T_n: X\to X$ be invertible measure preserving transformations that satisfy $T_1={\rm id}$ and $T_{mn}=T_m\circ T_n$ for all $m,n\in\N$. Let $L_j, L_j': \N^r\to\N, j=1,\dots,\ell,$ be pairwise independent linear forms. Then the averages
$$\BEu{\substack{\bm\in [N]^r\\ \gcd(\bm)=1}} \int F(T_{\prod_{j=1}^\ell L_j(\bm)}x) \cdot G(T_{\prod_{j=1}^\ell L_j'(\bm)}x) d\mu$$
converge for all $F, G \in L^2(\mu)$ as $N\to\infty$.
\end{theorem}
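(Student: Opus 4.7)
The plan is to apply Lemma~\ref{lem_keylemma} to the bounded function
\[
b(\bm):=\int F\bigl(T_{\prod_{j=1}^\ell L_j(\bm)}x\bigr)\cdot G\bigl(T_{\prod_{j=1}^\ell L_j'(\bm)}x\bigr)\,d\mu,
\]
which satisfies $|b(\bm)|\le\|F\|_2\|G\|_2$ by Cauchy--Schwarz, and then to recognise the resulting inner averages as instances of \cite[Theorem~1.5]{FrantzikinakisHost2016}. The end product will be convergence of $\tfrac1{N^r}\sum_{\gcd(\bm)=1}b(\bm)$, which upgrades to convergence of the primitive-point average via $Z_r(N)/N^r\to 1/\zeta(r)$ from \eqref{eqn_coprime_r}.

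The main algebraic step is to absorb the dilation $\bm\mapsto d\bm$ into the functions rather than into the linear forms. By linearity $L_j(d\bm)=dL_j(\bm)$ and $L_j'(d\bm)=dL_j'(\bm)$, so $\prod_j L_j(d\bm)=d^\ell\prod_j L_j(\bm)$ and similarly for $\prod_j L_j'$. The multiplicative cocycle identity $T_{mn}=T_m\circ T_n$ then gives
\[
b(d\bm)=\int F_d\bigl(T_{\prod_j L_j(\bm)}x\bigr)\cdot G_d\bigl(T_{\prod_j L_j'(\bm)}x\bigr)\,d\mu,
\]
where $F_d:=F\circ T_{d^\ell}$ and $G_d:=G\circ T_{d^\ell}$ lie in $L^2(\mu)$ with $\|F_d\|_2=\|F\|_2$ and $\|G_d\|_2=\|G\|_2$ since $T_{d^\ell}$ preserves $\mu$. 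Applying \cite[Theorem~1.5]{FrantzikinakisHost2016} to the \emph{same} pairwise independent forms $L_j,L_j'$ and the $L^2$ pair $(F_d,G_d)$, for each fixed $d\ge1$ I obtain
\[
c_d:=\lim_{M\to\infty}\BEu{\bm\in [M]^r}b(d\bm),\qquad |c_d|\le\|F\|_2\|G\|_2.
\]

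Feeding this into Lemma~\ref{lem_keylemma} and taking $N\to\infty$ (so $\lfloor N/d\rfloor\to\infty$ for each $d\le D$) yields
\[
\limsup_{N\to\infty}\left|\,\frac1{N^r}\sum_{\substack{\bm\in [N]^r\\ \gcd(\bm)=1}}b(\bm)\;-\;\sum_{d=1}^D\frac{\mu(d)\,c_d}{d^r}\,\right|=O\bigl(D^{-(r-1)}\bigr).
\]
Since $r\ge2$ and the $|c_d|$ are uniformly bounded, the series $\sum_{d\ge1}\mu(d)c_d/d^r$ converges absolutely; letting $D\to\infty$ gives convergence of $\tfrac1{N^r}\sum_{\gcd(\bm)=1}b(\bm)$, and rescaling by $N^r/Z_r(N)\to\zeta(r)$ completes the argument, producing the limit $\zeta(r)\sum_{d=1}^\infty\mu(d)c_d/d^r$. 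The only point needing care is the recognition that the dilation leaves the linear forms (and hence their pairwise independence) intact while merely twisting $F,G$ by the measure-preserving map $T_{d^\ell}$; without this observation one could not apply \cite[Theorem~1.5]{FrantzikinakisHost2016} with fixed forms and uniformly bounded $L^2$-norms, which is precisely what makes the Dirichlet-type sum in $d$ manifestly summable and therefore closes the argument.
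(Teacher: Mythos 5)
Your proposal is correct and follows exactly the route the paper intends: apply Lemma~\ref{lem_keylemma} to the bounded function $b(\bm)=\int F(T_{\prod_j L_j(\bm)}x)G(T_{\prod_j L_j'(\bm)}x)\,d\mu$, observe that $b(d\bm)$ is the same average with $F,G$ replaced by $F\circ T_{d^\ell}, G\circ T_{d^\ell}$ (the analogue of the twist $T^{\Omega(d^2)}$ in the proof of Theorem~\ref{thm_mainthm3}), invoke \cite[Theorem 1.5]{FrantzikinakisHost2016} for each fixed $d$, and sum. The absolute-convergence and renormalization steps are handled correctly, so nothing is missing.
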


\section*{Acknowledgments}

This work is supported by the National Natural Science Foundation of China (Grant
No. 12561001). The author would like to thank the referee for a very careful review and a number of
precise comments and helpful suggestions, which improve this paper a lot.  


\end{document}